\newtheorem{theorem}{Theorem}[section]
\newtheorem{lemma}[theorem]{Lemma}
\newtheorem{corollary}[theorem]{Corollary}
\newtheorem{proposition}[theorem]{Proposition}
\theoremstyle{definition}
\newtheorem{definition}[theorem]{Definition}
\newtheorem{remark}[theorem]{Remark}
\numberwithin{equation}{section}
\begin{document}

\title[The distance function]{The distance function and Lipschitz classes of mappings between metric spaces}

\author{Marijan Markovi\'{c}}

\address{Faculty of Sciences and Mathematics\endgraf
University of Montenegro\endgraf
D\v{z}ord\v{z}a Va\v{s}ingtona bb\endgraf
81000 Podgorica\endgraf
Montenegro\endgraf}
\email{marijanmmarkovic@gmail.com}

\begin{abstract}
We investigate when the local Lipschitz property of the real-valued function $g(z) = d_Y (f(z),A)$ implies the global
Lipschitz property of the mapping $f:X\to Y$ between the metric spaces $(X,d_X)$ and $(Y,d_Y)$. Here, $d_Y(y,A)$ denotes
the distance of $y\in Y$ from the non-empty set $\subseteq Y$. As a consequence, we find that an analytic function on a
uniform domain of a normed space belongs to the Lipschitz class if and only if its modulus satisfies the same condition;
in the case of the unit disk this result is proved by K. Dyakonov. We use the recently established version of a
classical theorem by Hardy and Littlewood for mappings between metric spaces. This paper is a continuation of the recent
article by the author \cite{MARKOVIC.JGA.2}.
\end{abstract}

\subjclass[2020]{Primary 26A16, 46E50; Secondary 51F30, 46G20, 30L15}

\keywords{Hardy and Littlewood theorems, mappings between metric spaces, the distance function, uniform domains,
analytic mappings on domains in normed spaces}

\thanks{This research was supported by the Ministry of Education, Science, and Innovation of Montenegro
through the grant ''Mathematical Analysis, Optimization, and Machine Learning''.}

\maketitle

\section{Introduction}


This paper is mainly motivated by a result for analytic functions on the unit disk $\mathbb{D}\subseteq\mathbb{C}$
obtained by Dyakonov \cite{DYAKONOV.ACTM}. Among others, he deduced the following interesting result. An analytic
function belongs to the certain Lipschitz class if and only if its modulus belongs to the same Lipschitz class. In
a special case, for $\alpha\in (0,1)$ we have the following two equivalent statements:
\begin{equation*}
f\in \Lambda ^{\alpha} (\mathbb{D}) \Leftrightarrow  |f|\in \Lambda ^{\alpha} (\mathbb{D}).
\end{equation*}
We have denoted by
\begin{equation*}
\Lambda ^\alpha (\mathbb{D}) = \{f:\mathbb {D}\to \mathbb{C}: \exists C_f \text{ such that }
|f(z) - f(w)|\le  C_f|z-w|^\alpha \text{ for } z,w\in \mathbb {D}\}
\end{equation*}
the Lipschitz  (H\"{o}lder) class of functions on the unit disk.


Pavlovi\'{c} \cite{PAVLOVIC.ACTM} gave a new proof of the Dyakonov theorem using the well-known Hardy-Littlewood
theorem which says that an analytic function $f$ on the unit disk satisfies
\begin{equation}\label{EQ.HL.1}
| f'(z) |\le C_f(1- |z|)^{\alpha-1},\quad  z\in \mathbb{D},
\end{equation}
if and only if it is $\alpha$-H\"{o}lder continuous, i.e.
\begin{equation}\label{EQ.HL.2}
|f(z) - f (w)| \le \ C'_f |z-w|^\alpha,\quad z,w\in\mathbb{D}.
\end{equation}
Here, $C_f$ and $C'_f$ are constants that depend only on $f$. Estimates of the type $C'_f \le a C_f$ and
$C_f\le b C'_f$, where $a$ and $b$ are absolute constants, are also known and important.

After the Dyakonov result appeared, many authors considered whether a similar equivalence holds for other mapping
classes, such as quasi-regular mappings or harmonic mappings on domains in $\mathbb{R}^n$. Our aim is to consider this
question for mappings between metric spaces and, in particular, for analytic mappings on domains in a normed space of
not necessarily finite dimension. The main results of this work are collected in the third section after we give some
preliminaries and auxiliary results.

Since a version of the Hardy-Littlewood theorem plays an important role in this paper, we shall present in the sequel
some earlier extensions of it. The results obtained by Gehring and Martio and Lappalainen in a more general setting
are those that served as motivation for the author's generalization of the Hardy-Littlewood theorem for mappings
between metric spaces \cite{MARKOVIC.JGA.2}.

Lappalainen \cite{LAPPALAINEN.AASF} obtained an extension of the one part of the Hardy and Littlewood theorem for
mappings on so called  Lip$_\varphi$-extension domains, where $\varphi:[0,\infty)\to [0,\infty)$ is a majorant
in the meaning: a continuous function, differentiable in $(0,\infty)$, with $\varphi (0) = 0$, $\varphi (t)>0$ for
$t\in (0,\infty)$, an increasing function on $(0,\infty)$, and $\varphi'$ a decreasing function on $(0,\infty)$.
In the definition of Lip$_\varphi$-extension domains, an important role plays local Lipschitz calasses
\cite{LAPPALAINEN.AASF}. Roughly, a domain belongs to this class if every locally Lipschitz function is globally
Lipschitz along with the norm control. A theorem says that a domain $D$ with a non-empty boundary is a
Lip$_\varphi$-extension domain if and only if there exists a constant $M$ such that for every $x, y\in D$ one finds
a rectifiable curve $\gamma_{x,y}\subseteq D$ with ends at $x$ and $y$ such that
\begin{equation}\label{EQ.LAPPALAINEN.COND}
\int _{\gamma_{x,y}} \frac{\varphi (d(z,\partial D))}{d(z,\partial D)} |dz|  \le M  \varphi ( |x-y|),
\end{equation}
where $d(z,\partial D)$  is the distance of $z$ from $\partial  D$. In the case of standard majorants
$\varphi(t) = \varphi_\alpha (t)=t^\alpha$, $\alpha\in (0,1)$, it is said that a domain is a Lip$_\alpha$-extension
domain, following the terminology of Gehring and Martio.

A variant of the one part of the Hardy and Littlewood theorem obtained by Lappalainen
\cite[Theorem 7.3]{LAPPALAINEN.AASF} states that if an analytic function on a Lip$_\varphi$-extension domain
$D\subseteq \mathbb{C}$ satisfies
\begin{equation}\label{EQ.LAPPALAINEN.1}
| f'(z) |\le  C_f\frac{\varphi (d (z,\partial D))}{d(z,\partial D)},\quad  z\in D,
\end{equation}
then $f$ satisfies the Lipschitz type condition
\begin{equation}\label{EQ.LAPPALAINEN.2}
|f(z) - f (w)| \le C'_f \varphi ( |z-w|),\quad z,w\in D,
\end{equation}
where $ C_f$ and $ C'_f$  are constants. Gehring and Martio obtained a generalization of the same part of the Hardy
and Littlewood theorem  for mappings on uniform domains \cite{GEHRING.CV}, and somewhat later on Lip$_\alpha$-extension
domains \cite{GEHRING.AASF}.

In this paper a variant of the Gehring and Martio result is proved for analytic mappings on uniform domains in a normed
space.

\section{Preliminaries}


A continuous mapping $\gamma:[a,b]\to X$, is called a curve in a metric space $(X,d_X)$ that connects $\gamma (a)$ and
$\gamma (b)$. For simplicity, it is convenient to identify the curve $\gamma$ with its image $\gamma([a,b])$. In case
$\gamma$ being a 1-1 mapping, the curve is called an arc. The length of the curve $\gamma$ is given by
\begin{equation*}
\ell  (\gamma)   = \sup   \sum_{i=1}^n d_X  (\gamma(t_{i-1}),\gamma (t_i)),
\end{equation*}
where the supremum is taken over all finite partitions $a = t_0 < t_1 < \dots < t_n = b$ of $[a,b]$. A curve is
rectifiable if its length is finite. The family of rectifiable curves that connect $x\in X$ and $y\in X$ is denoted by
$\Gamma({x,y})$. A metric space $X$ is said to be rectifiable connected if $\Gamma(x,y)$ is not empty for every
$x, y\in X$. If $\gamma:[a,b]\to X$ is a rectifiable curve in a metric space $X$, and $[c,d]\subseteq [a,b]$, then the
(rectifiable) curve $\gamma:[c,d]\to X$ is denoted by $\gamma_{[c,d]}$. In case $\gamma$ is an arc, $\gamma(c) = z$,
and $\gamma (d) = w$, we shall also use the notation $\gamma [z,w]$ for this part of the curve.

The integral of a continuous function $f$ on $X$ over a rectifiable curve $\gamma\subseteq X$ is denoted by
$\int_\gamma f$. It may be defined via generalized Riemann sums, as in \cite{MARKOVIC.JGA.1}:
For every $\varepsilon>0$ there exists $\delta>0$ such that
\begin{equation*}
\left|\int_\gamma  f - \sum _{i=1}^n f (\gamma (s_i)) \ell(\gamma|_{[t_{i-1},t_i]} ) \right|<\varepsilon
\end{equation*}
for every subdivision $a=t_0<t_1<\cdots<t_n= b$ of $[a,b]$ with $\max_{1\le i\le n}|t_i-t_{i-1}|<\delta$, and every
sequence $s_1,s_2,\dots,s_n$ with $t_{i-1}\le s_i\le t_i$, $1\le i \le n$.

This definition is convenient for our consideration in the papers \cite{MARKOVIC.JGA.1} and \cite{MARKOVIC.JGA.2}.
On the other hand, the definition may be given as in \cite[Chapter V]{HEINONEN.BOOK} not only for continuous functions,
using the fact that every rectifiable curve can be parameterized by the arc length. Let $\gamma$ be a rectifiable curve
and denote its length by $\ell$. For a Borel measurable function $f$ in a metric space $X$ one can define the integral
of $f$ over this curve by
\begin{equation*}
\int _\gamma f  = \int_0^\ell f(\gamma(s))ds,
\end{equation*}
where on the right side we have the standard Lebesgue integral.
 
An everywhere positive and continuous function on a metric space is called a weight function. Let $w$ be a weight
on a rectifiable connected metric space $X$. The weighted distance on $X$ generated by this weight is given by
\begin{equation*}
d_w   (x, y)  =  \inf_{\gamma \in \Gamma(x,y)} \int_\gamma w, \quad x,y\in X.
\end{equation*}

An important example occurs considering a domain $D$ in a metric space $X$ with non-empty boundary. In this case, the
weight function on $D$ can be defined by setting $w (x) = d_X(x, \partial D)^{-1}$, $x\in D$. Since the set $D$ is
open, we clearly have the positivity of $w$. The distance $d_w$ is called the quasi-hyperbolic distance on $D$ and
plays a very important role in some considerations. If $w$ is equal to $1$ everywhere, we obtain the so-called inner
distance on $D$.

\medskip


We now introduce the mapping classes that will appear in this work.

In the sequel, let $(X,d_X)$ and $(Y,d_Y)$ be two metric spaces. For $\varphi:[0,\infty)\to [0,\infty)$, which we shall
call the majorant, the (global) Lipschitz mapping class $\Lambda^\varphi(X,Y)$ is defined in the following way. A
mapping $f:X\to Y$ belongs to this class if there exists a constant $C_f$ such that
\begin{equation*}
d_Y (f(x),f(y)) \le  C_f  \varphi (d_X (x,y)),\quad  x,y\in X.
\end{equation*}
The infimum of $C_f$ in all $x,y\in X$ is called the norm of $f$ in the class $\Lambda^\varphi (X,Y)$, and it is denoted
by $\|f\|_{\Lambda^\varphi (X,Y)}$. For $\varphi = \varphi_\alpha$, where $\varphi_\alpha (t) = t^\alpha$,
$t\in [0,\infty)$, $\alpha\in (0, \infty)$, we write $\Lambda ^\alpha (X,Y)$ instead of $\Lambda^{\varphi_\alpha}(X,Y)$.
In the Introduction, we have encountered the class $\Lambda ^\alpha (\mathbb{D})$ which is
$\Lambda ^\alpha (\mathbb{D},\mathbb{C})$ in our present notation.

The local Lipschitz class with respect to a weight $w$ on the metric space $X$ is indicated by
$\Lambda^\varphi_{w} (X,Y)$. It contains mappings $f:X\to Y$ for which there exists a constant $C_{f}^w$ such that
\begin{equation*}
d_ Y (f(x) ,  f(y)) \le C_{f}^w \varphi (  d_X (x,y)), \quad y\in B(x, w(x)).
\end{equation*}
The infimum of all $C_{f}^w$ in all $x\in X$ and $y\in B(x, w(x))$ is defined as the norm in this class of mappings. As
before, if $\varphi = \varphi_\alpha$ we write $\Lambda ^{\alpha}_w(X,Y)$ instead of $\Lambda ^{\varphi_\alpha}_w(X,Y)$.
When considering the local Lipschitz class on a domain $D\subseteq X$ with nonempty boundary, we write
$\Lambda ^\varphi_{\mathrm {loc}} (D, Y)$ in the case where the weight function is given via the distance function
$w(x) =\frac 12 d(x,\partial D)$, $x\in D$.

For a mapping $f:X\to Y$, define
\begin{equation*}
{d}^\ast f (x) = \limsup _{y\to x} \frac {d_Y ( f(x), f(y))}{d_X(x,y)},
\end{equation*}
if $x\in X$ is not isolated. We set $d^\ast f (x) = 0$, if $x$ is isolated. The class $\mathrm {D}^\ast(X,Y)$
contains mappings $f:X\to Y$ such that $d^\ast f (x)$ is finite for every $x\in X$.

The Bloch class $\mathrm{B}_{w}(X,Y)$ consists of mappings $f\in \mathrm {D}^\ast(X,Y)$ for which there exists a
constant $C_f'$ such that
\begin{equation*}
d^\ast f(x) \le C_f'  w(x),\quad  x\in X.
\end{equation*}
The least possible constant $C_f'$ in the above inequality is defined as the norm of $f$ in the class, and it is
denoted by $\|f\|_{\mathrm {B}_{w} (X,Y)}$.

\medskip


We recall some facts regarding analytic mappings on domains in a normed space and list some useful lemmas with
references. For a version of the classical Schwarz-Pick inequality, we provide a sketch of the proof as follows
\cite{BLASCO.JFA} and \cite{KALAJ.GJM}.

A mapping $f:D\to Y$, where $D \subseteq X$ is a domain in a normed space $X$ (over $\mathbb{R}$ or $\mathbb{C}$), and
$Y$ is another normed space (over the same field), is said to be Fr\'{e}chet differentiable at $x\in D$, if there exists
a continuous linear mapping $d f(x):X\to Y$, called Fr\'{e}chet differential, such that in an open ball
$B_X(x,r) = \{y\in X: \|x-y\|_X < r\}\subseteq D$ there holds
\begin{equation*}
f(y) -  f(x) = d f (x)  (y-x) +  \alpha (y-x),\quad y\in B_X(x,r),
\end{equation*}
where $\alpha:B_X (0,r)\to  Y$ satisfies $\lim _{h\to  0} \frac {\|\alpha (h)\|_Y}{\|h\|_X} = 0$. We denote by
\begin{equation*}
\|d f (x)\|_{X\to Y} = \sup _{\|\zeta\|_X = 1} \|d f(x)\zeta\|_Y
\end{equation*}
the norm of the linear mapping $d f(x): X\to Y$. That norm coincides with $d^\ast f(x)$. The proof of this fact can be
found in \cite{MARKOVIC.JGA.1}. More precisely, we have

\begin{lemma}\label{LE.FRECHET}
Let $X$ and $Y$ be normed spaces over the same field ($\mathbb{R}$ or $\mathbb{C}$) and let $D\subseteq X$ be a domain.
If a mapping $f:D\to Y$ is Frech\'{e}t differentiable at $x\in D$, then we have
\begin{equation*}
\| d f (x) \|_{X\to Y} =  \limsup_{y\to x} \frac {\|f(x) - f(y)\|_Y}{\|x - y\|_X}.
\end{equation*}
\end{lemma}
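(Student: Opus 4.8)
The plan is to establish the two inequalities separately, both of which follow almost immediately from the defining expansion $f(y)-f(x) = \mathrm{d}f(x)(y-x) + \alpha(y-x)$, where $\lim_{h\to 0}\|\alpha(h)\|_Y/\|h\|_X = 0$. Throughout I would write $L = \mathrm{d}f(x)$ for brevity, and note first that since $D$ is open in a normed space, $x$ is not an isolated point, so the $\limsup$ on the right-hand side is the genuine one.

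For the upper bound, I would take $y\in B_X(x,r)$ with $y\neq x$ and combine the triangle inequality with the linearity of $L$:
\[
\frac{\|f(y)-f(x)\|_Y}{\|y-x\|_X}
\le \left\|L\!\left(\tfrac{y-x}{\|y-x\|_X}\right)\right\|_Y + \frac{\|\alpha(y-x)\|_Y}{\|y-x\|_X}
\le \|L\|_{X\to Y} + \frac{\|\alpha(y-x)\|_Y}{\|y-x\|_X}.
\]
Letting $y\to x$, the last term tends to $0$ by the defining property of $\alpha$, and the inequality $\|L\|_{X\to Y}\ge \limsup_{y\to x}\|f(x)-f(y)\|_Y/\|x-y\|_X$ follows.

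For the reverse inequality, I would fix $\varepsilon>0$ and use that $\|L\|_{X\to Y}$ is a supremum to pick a unit vector $\zeta\in X$ with $\|L\zeta\|_Y > \|L\|_{X\to Y}-\varepsilon$. Since $D$ is open, the radial points $y_t := x+t\zeta$ lie in $D$ for all small $t>0$, and $\|y_t-x\|_X = t$. Plugging $h = t\zeta$ into the expansion and using the reverse triangle inequality gives
\[
\frac{\|f(y_t)-f(x)\|_Y}{\|y_t-x\|_X} = \frac{\|tL\zeta + \alpha(t\zeta)\|_Y}{t}
\ge \|L\zeta\|_Y - \frac{\|\alpha(t\zeta)\|_Y}{t},
\]
and since $\|\alpha(t\zeta)\|_Y/t = \|\alpha(t\zeta)\|_Y/\|t\zeta\|_X \to 0$ as $t\to 0^+$, we obtain
\[
\limsup_{y\to x}\frac{\|f(x)-f(y)\|_Y}{\|x-y\|_X} \ge \limsup_{t\to 0^+}\frac{\|f(y_t)-f(x)\|_Y}{t} \ge \|L\zeta\|_Y > \|L\|_{X\to Y}-\varepsilon.
\]
Letting $\varepsilon\to 0$ yields the matching lower bound, hence equality.

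There is no substantial obstacle here: every estimate reduces to the triangle inequality and the $o(\|h\|_X)$ behaviour of the remainder. The only points that require a little care are that the supremum defining the operator norm is in general attained only in the limit — which forces the $\varepsilon$-argument and the choice of a near-maximizing direction $\zeta$ — and that the approach to $x$ in the lower bound must be taken radially along that $\zeta$, not along an arbitrary sequence, so that the direction of $y_t-x$ stays fixed while its length shrinks.
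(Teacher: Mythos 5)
Your proof is correct and complete: the upper bound follows from the triangle inequality and the $o(\|h\|_X)$ remainder, and the lower bound from evaluating along radial points $x+t\zeta$ in a near-maximizing direction $\zeta$, which is exactly the standard argument for this fact. The paper itself gives no proof of this lemma (it only cites \cite{MARKOVIC.JGA.1}), so there is nothing to compare against; your handling of the one genuinely delicate point --- that the operator norm need not be attained, forcing the $\varepsilon$-choice of $\zeta$ and the radial approach --- is exactly right.
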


If $X$ and $Y$ are complex normed spaces, then a mapping $f:D\to Y$ is said to be analytic if it is Fr\'{e}chet
differentiable everywhere on the domain $D$. The well-known result for bounded analytic functions on the unit disk
$\mathbb{D}$ is the Schwarz-Pick lemma. We shall need an extension of this result for analytic mappings on the unit ball
in a normed space that is not necessarily finite-dimensional. This is the content of the following lemma.

\begin{lemma}[The Schwarz-Pick lemma]\label{LE.SCHWARZ-PICK}
Let $X$ and $Y$ be complex normed spaces and assume that the norm on $Y$ is generated by an inner product. Let
$f:B_X(0,1)\to Y$ be an analytic mapping bounded by $1$. Then we have
\begin{equation*}
\|d f(0)\|_{X\to Y}\le {1-\|f(0)\|_Y^2},\quad  \text{if }\dim Y =1,
\end{equation*}
and
\begin{equation*}
\|d f(0)\|_{X\to Y}\le \sqrt{1-\|f(0)\|_Y^2},\quad \text{if }\dim Y\not = 1.
\end{equation*}
\end{lemma}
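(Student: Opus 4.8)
The plan is to reduce the vector-valued Schwarz--Pick inequality at the origin to the classical one-dimensional case by restricting $f$ along a suitable complex line in $X$ and, when $\dim Y \ne 1$, by pairing with a well-chosen unit vector in $Y$. First I would fix $\zeta \in X$ with $\|\zeta\|_X = 1$ and consider the analytic function $g:\mathbb{D}\to Y$ defined by $g(\lambda) = f(\lambda\zeta)$; since $\|\lambda\zeta\|_X = |\lambda| < 1$ on $\mathbb{D}$ and $f$ is bounded by $1$, the map $g$ is analytic from $\mathbb{D}$ into the closed unit ball of $Y$, with $g(0) = f(0)$ and $g'(0) = df(0)\zeta$. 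Thus it suffices to prove the two claimed bounds for analytic $g:\mathbb{D}\to Y$ with $\|g\|_Y\le 1$, namely $\|g'(0)\|_Y \le 1 - \|g(0)\|_Y^2$ in the scalar case and $\|g'(0)\|_Y \le \sqrt{1-\|g(0)\|_Y^2}$ otherwise, and then take the supremum over $\|\zeta\|_X = 1$.

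For the case $\dim Y \ne 1$, I would pick a unit vector $e\in Y$ (with respect to the inner product generating the norm) such that $\langle g'(0), e\rangle = \|g'(0)\|_Y$, and set $h(\lambda) = \langle g(\lambda), e\rangle$, a scalar analytic function on $\mathbb{D}$ with $|h(\lambda)| \le \|g(\lambda)\|_Y \le 1$, $h'(0) = \|g'(0)\|_Y$, and $|h(0)| = |\langle f(0), e\rangle| \ge$ \dots here is the subtlety: $|h(0)|$ need not equal $\|g(0)\|_Y$. The classical scalar Schwarz--Pick inequality gives $|h'(0)| \le 1 - |h(0)|^2$, so $\|g'(0)\|_Y \le 1 - |h(0)|^2 \le 1$; to get the sharper bound $\sqrt{1-\|f(0)\|_Y^2}$ one chooses $e$ more cleverly. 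Following the approach in \cite{BLASCO.JFA} and \cite{KALAJ.GJM}, I would instead take $e$ in the plane spanned by $f(0)$ and $g'(0)$ (this is where $\dim Y \ne 1$ is used, to have room) and optimize: writing $f(0) = a e + b e^\perp$ and using that $g'(0)$ can be assumed to lie in $\mathrm{span}\{e, e^\perp\}$, the two scalar components $\langle g, e\rangle$ and $\langle g, e^\perp\rangle$ satisfy $|\langle g(\lambda), e\rangle|^2 + |\langle g(\lambda), e^\perp\rangle|^2 \le 1$, and applying the scalar Schwarz--Pick lemma together with this constraint (via a normal-families or extremal argument, or the Cauchy--Schwarz inequality applied to the Taylor coefficients) yields $\|g'(0)\|_Y^2 \le 1 - \|g(0)\|_Y^2$.

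For the scalar case $\dim Y = 1$, there is nothing to do beyond the classical Schwarz--Pick lemma itself: $g:\mathbb{D}\to\overline{\mathbb{D}}$ analytic gives $|g'(0)| \le 1 - |g(0)|^2$ directly (e.g.\ by composing with the disk automorphism sending $g(0)$ to $0$ and applying the Schwarz lemma, then differentiating). The main obstacle is the extremal/optimization step in the higher-dimensional case: one must verify that the worst case genuinely occurs in a two-dimensional subspace of $Y$ and that the constraint $\sum |c_k|^2 \le 1$ coming from $\|g\|_Y\le 1$ can be combined with the scalar estimates to produce the factor $\sqrt{1-\|f(0)\|_Y^2}$ rather than the weaker $1 - \|f(0)\|_Y^2$. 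I expect this to be handled by a clean reduction, as in the cited references, using that the coefficients of an $Y$-valued $H^\infty$ function of norm $\le 1$ satisfy a Schur-type contraction condition. Finally, taking the supremum over unit $\zeta\in X$ transfers both inequalities from $g'(0)$ back to $\|df(0)\|_{X\to Y}$, completing the proof.
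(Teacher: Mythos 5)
Your reduction to one complex variable in the domain (restricting $f$ along $\lambda\mapsto f(\lambda\zeta)$ for unit $\zeta$ and taking the supremum over $\zeta$ at the end) is sound and matches what the paper does via the maps $u_\zeta$, and the scalar case $\dim Y=1$ is correctly dispatched by the classical Schwarz--Pick lemma. The problem is the case $\dim Y\ne 1$, which is the actual content of the lemma: there you never prove the decisive inequality $\|g'(0)\|_Y\le\sqrt{1-\|g(0)\|_Y^2}$ for an analytic $g:\mathbb{D}\to \overline{B_Y(0,1)}$. You correctly observe that pairing with a unit vector $e$ dual to $g'(0)$ only yields $\|g'(0)\|_Y\le 1-|\langle g(0),e\rangle|^2$, which does not control $\sqrt{1-\|g(0)\|_Y^2}$; but the proposed repair --- restricting to the plane spanned by $f(0)$ and $g'(0)$ and then ``optimizing'' via ``a normal-families or extremal argument, or the Cauchy--Schwarz inequality applied to the Taylor coefficients'', with the key contraction condition deferred to the references --- is a list of candidate strategies, not a proof. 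As written, the hard half of the lemma is asserted rather than established.

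For the record, your own parenthetical hint is the fastest way to close the gap, and it bypasses the two-dimensional reduction entirely: writing $g(\lambda)=\sum_{k\ge0}c_k\lambda^k$ with $c_k\in Y$ and using that the norm of $Y$ comes from an inner product, Parseval's identity gives
\begin{equation*}
\frac{1}{2\pi}\int_0^{2\pi}\|g(re^{i\theta})\|_Y^2\,d\theta=\sum_{k\ge0}\|c_k\|_Y^2\,r^{2k}\le 1,\qquad 0<r<1,
\end{equation*}
whence $\|c_0\|_Y^2+\|c_1\|_Y^2\le1$, i.e.\ $\|g'(0)\|_Y\le\sqrt{1-\|g(0)\|_Y^2}$. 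The paper takes a genuinely different route: it first proves $\|dg(0)\|_{X\to Y}\le1$ when $g(0)=0$ (by composing with norm-one functionals $y^\ast$ and unit vectors $\zeta$ and applying the ordinary Schwarz lemma), and then handles general $f$ by writing $f=\varphi_a\circ g$ with $a=f(0)$ and $\varphi_a$ the involutive M\"obius transform of $B_Y(0,1)$, so that $\|df(0)\|_{X\to Y}\le\|d\varphi_a(0)\|_{Y\to Y}$, the latter norm being $1-\|a\|_Y^2$ or $\sqrt{1-\|a\|_Y^2}$ according to whether $\dim Y=1$ or not. The M\"obius route has the advantage of treating both dimension cases uniformly and of explaining where the two different constants come from; the Parseval route is shorter but only yields the square-root bound, so the one-dimensional case must still be handled separately by the classical lemma.
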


In his work, Kalaj \cite{KALAJ.GJM} gave an interesting proof of this result in the finite-dimensional case, i.e., for
$X = \mathbb{C}^n$ and $Y = \mathbb{C}^m$, using the M\"{o}bius transforms in $\mathbb{C}^n$. However, this proof can
be extended in the setting of the above lemma.

As in the lemma, let $Y$ be an inner product space over the field of complex numbers. The  M\"{o}bius transforms of the
unit ball $B_Y(0,1)$ onto itself are analytic mappings given by
\begin{equation*}
\varphi_a  (z) = \frac{a - P_a z - s_a Q_a z}{1- \left<z,a\right>},\quad z\in B_Y(0,1),
\end{equation*}
where for $a\in B_Y(0,1)$ we have denoted $s_a = \sqrt{1-\|a\|_Y^2}$, $P_a:Y\to Y$ is the orthogonal projection along
the one-dimensional subspaces, that is,
\begin{equation*}
P_a(z) = \frac{\left<z,a\right>}{\left<a,a\right>}a, \quad z\in Y,
\end{equation*}
in case $a\ne  0$ ($P_ 0 (z) = 0$), and $Q_a:Y\to Y$, is the orthogonal complement of $P_a$, i.e. $Q_a = \mathrm {Id} - P_a$.

It is easy to verify that $\varphi_a (0) = a$. Moreover, $\varphi_a$ is an involutive mapping, which means that
$\varphi_a \circ\varphi_a = \mathrm{Id}$ (identity) and therefore $\varphi^{-1} = \varphi$.

In \cite[Lemma 3.2]{BLASCO.JFA}, the authors show that $d\varphi_a (0)$ is a linear combination of $P_a$ and $Q_a$,
that is,
\begin{equation*}
d \varphi_a (0) = - s_a^2P_a - s_a Q_a.
\end{equation*}
In \cite[Lemma 2.2]{KALAJ.GJM} the author finds the norm of the continuous linear mapping $d\varphi_a (0)$ in the
finite-dimensional setting. It depends on the dimension of the space $Y$ in the following way:
$\|d \varphi_a (0)\|_{Y\to Y} = 1 - \|a\|_Y^2$, if $\dim Y = 1$, and $\|d \varphi_a (0)\|_{Y\to Y} = \sqrt{1-\|a\|^2_Y}$,
when the dimension of $Y$ is not equal to $1$. Although the last result is derived for finite-dimensional $Y$, the
proof is valid without this assumption.

\begin{proof}[Proof of Lemma \ref{LE.SCHWARZ-PICK}]
A special case of the lemma is when we have a mapping $g$ that maps $0_X$ to $0_Y$ in place of $f$. In this case, the
inequality $\|dg(0)\|_{X\to Y} \le  1$ follows from the ordinary Schwarz lemma. In fact, we can consider the analytic
mapping $g_\zeta (z) = (y^\ast\circ  g\circ u_\zeta )(z)$ on the unit disk $\mathbb{D}$, where $y^\ast$ is a linear
functional on $Y$ of the norm $1$, $\zeta\in X$ is of the norm $1$, and $u_\zeta (z) = z \zeta$, $z\in \mathbb{D}$.
Since $|g_\zeta (z)|\le 1$, $z\in \mathbb{D}$, and $g_\zeta (0) = 0$, we can apply the ordinary Schwarz lemma. By this
classical result, we have $|g'_\zeta (0)|\le 1$ which, bearing in mind that $g_\zeta$ is the composition of analytic
mappings, implies $|y^\ast ( dg (0)\zeta ) |\le 1$. Since this holds for every $y^\ast\in Y^\ast$ and $\zeta$ of the
unit norm, we first conclude that $\|dg (0)\zeta\|_Y\le 1$, and finally $\|dg(0)\|_{X\to Y}\le 1$.

Let us now consider the general case. Let $f:B_X(0,1)\to Y$ be as in the formulation of this lemma and denote
$a = f(0_X)$. The general case follows from the special case by applying a certain M\"{o}buis transform on the
mapping $f$. In fact, let $g   = \varphi _a \circ f$. Then we have $g(0_X) = \varphi_a(f(0_X)) = \varphi_a (a) = 0_Y$.
Applying the Schwarz lemma to the mapping $g$, we find $\|dg (0)\|_{X\to Y} \le 1$. Since the mapping $\varphi_a$
is involutive, we obtain $f = \varphi _a \circ g$ and $df(0_X) = d\varphi _a(0_Y) \circ dg(0_X)$. It follows
\begin{equation*}
\|df(0)\|_{X\to Y} \le \|d\varphi _a(0_Y) \|_{Y\to Y}  \|dg(0_X)\| _{X\to Y} \le \|d\varphi _a(0) \|_{Y\to Y},
\end{equation*}
which we aimed to prove.
\end{proof}

\section{The main results}


The following two results for mappings between metric spaces, proved by the author \cite{MARKOVIC.JGA.2}, generalize
the Hardy-Littlewood theorem stated in the Introduction. For the first result, we need not make any assumptions about
the behavior of the $d^\ast f$. For the second result, we need the well local behavior of $d^\ast f$, which we clarify
in the following definition. For this converse result, we also need some stronger conditions for the majorant.

\begin{definition}
Assume that $w$ is a weight on a metric space $X$, and let $Y$ be another metric space. A mapping
$f\in\mathrm {D}^\ast(X,Y)$, bounded on every open ball $B_X(x,r)$, $x\in X$, $0<r<w(x)$, is said to be regularly
oscillating if there exists a constant $K$ such that
\begin{equation}\label{EQ.RO}
d^\ast f  (x)  \le  \frac  {K} r {\sup_{y\in B_X(x,r)} d_Y ( f(x),  f(y) )},\quad x\in X,\,  0<r<w(x).
\end{equation}
\end{definition}

Note that the right-hand side of \eqref{EQ.RO} resembles the maximal function.

For example, a bounded analytic mapping on a domain $D$ with a nonempty boundary in a normed space belongs to the class
of regularly oscillating mappings with respect to the weight $w(z) = d(z,\partial D)$, $z\in D$, as proved in
\cite{MARKOVIC.JGA.2} using the Schwarz lemma. In this case, the relation \eqref{EQ.RO} is valid with the constant $K=1$.

\begin{proposition}\label{PR.1}
Let $(X,d_X)$ and $(Y,d_Y)$ be two metric spaces, $w$ a weight on the first one, and $\varphi:[0,\infty)\to [0,\infty)$
a majorant, such that the following condition holds
\begin{equation}\label{EQ.COND.LAPP.VARPHI}
(\exists M>0)(\forall x,y\in X)(\exists\gamma_{x,y}\in \Gamma(x,y)) \int _{\gamma_{x,y}} w \le M \varphi (d_X (x,y)).
\end{equation}
Then we have
\begin{equation*}
\|f\| _{ \Lambda^\varphi  (X,Y)}\le   M\|f\| _{ \mathrm {B}_w (X,Y)}, \quad f\in \mathrm {B}_w (X,Y).
\end{equation*}
\end{proposition}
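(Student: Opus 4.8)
The plan is to reduce the statement to a one-dimensional differential inequality along the curve $\gamma_{x,y}$ furnished by \eqref{EQ.COND.LAPP.VARPHI}. First note that a mapping $f\in\mathrm{D}^\ast(X,Y)$ is automatically continuous: if $d^\ast f(x)<\infty$, then $d_Y(f(x),f(y))\le(d^\ast f(x)+1)\,d_X(x,y)$ for all $y$ sufficiently close to $x$. Now fix $x,y\in X$, set $C=\|f\|_{\mathrm{B}_w(X,Y)}$ (so that $d^\ast f(z)\le Cw(z)$ for every $z\in X$), pick $\gamma=\gamma_{x,y}\in\Gamma(x,y)$ as in \eqref{EQ.COND.LAPP.VARPHI}, and let $\sigma:[0,\ell]\to X$ be its parametrization by arc length, a $1$-Lipschitz map with $\ell=\ell(\gamma)$ and $\int_\gamma w=\int_0^\ell w(\sigma(u))\,du$. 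Put $F=f\circ\sigma:[0,\ell]\to Y$, a continuous curve, and $\Phi(t)=C\int_0^t w(\sigma(u))\,du$, which is of class $C^1$ and nondecreasing because $w\circ\sigma$ is continuous.

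The core of the proof is the estimate
\begin{equation*}
d_Y(F(s),F(t))\le\Phi(t)-\Phi(s),\qquad 0\le s\le t\le\ell .
\end{equation*}
To establish it, fix $s$ and consider $\psi(t)=d_Y(F(s),F(t))$ on $[s,\ell]$; it is continuous with $\psi(s)=0$. For $t\in[s,\ell)$ and small $h>0$, the triangle inequality gives $\psi(t+h)-\psi(t)\le d_Y(F(t+h),F(t))=d_Y(f(\sigma(t+h)),f(\sigma(t)))$. When $\sigma(t+h)\ne\sigma(t)$ we rewrite this as $\frac{d_Y(f(\sigma(t+h)),f(\sigma(t)))}{d_X(\sigma(t+h),\sigma(t))}\cdot\frac{d_X(\sigma(t+h),\sigma(t))}{h}$, in which the second factor is $\le1$ since $\sigma$ is $1$-Lipschitz, while the whole expression vanishes when $\sigma(t+h)=\sigma(t)$; letting $h\to0^+$ and using continuity of $\sigma$ we obtain $\limsup_{h\to0^+}\frac{\psi(t+h)-\psi(t)}{h}\le d^\ast f(\sigma(t))\le Cw(\sigma(t))=\Phi'(t)$. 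Hence the upper right Dini derivative of the continuous function $\psi-\Phi$ is $\le0$ at every point of $(s,\ell)$, so by the classical increasing function theorem $\psi-\Phi$ is nonincreasing on $[s,\ell]$, which gives $\psi(t)-\Phi(t)\le\psi(s)-\Phi(s)=-\Phi(s)$, that is, the asserted inequality.

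Summing this inequality over an arbitrary partition $0=s_0<s_1<\dots<s_n=\ell$ yields $\sum_i d_Y(F(s_{i-1}),F(s_i))\le\Phi(\ell)-\Phi(0)=C\int_\gamma w$, and taking the supremum over partitions gives $\ell(F)\le C\int_\gamma w$. Since a curve is never shorter than the distance between its endpoints,
\begin{equation*}
d_Y(f(x),f(y))=d_Y(F(0),F(\ell))\le\ell(F)\le C\int_{\gamma_{x,y}}w\le CM\,\varphi(d_X(x,y)),
\end{equation*}
the last step being \eqref{EQ.COND.LAPP.VARPHI}. As $x,y\in X$ were arbitrary, $CM=M\|f\|_{\mathrm{B}_w(X,Y)}$ is an admissible constant in the definition of $\Lambda^\varphi(X,Y)$, and therefore $\|f\|_{\Lambda^\varphi(X,Y)}\le M\|f\|_{\mathrm{B}_w(X,Y)}$.

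The main obstacle I anticipate is the Dini derivative step together with the passage to the arc length parametrization: one must handle carefully the parameter values where $\sigma$ is locally constant (or where $\sigma(t)$ is isolated in $X$, in which case $d^\ast f(\sigma(t))=0$ by definition) and make sure the relevant $\limsup$ is genuinely dominated by $d^\ast f(\sigma(t))$; once this is in place, the increasing function theorem and the summation over partitions are routine. Alternatively, one could invoke directly the length estimate $\ell(f\circ\gamma)\le\int_\gamma d^\ast f$ for continuous $f$ in the line integral framework of \cite{MARKOVIC.JGA.1} and then bound $\int_\gamma d^\ast f\le C\int_\gamma w$ using $d^\ast f\le Cw$; the argument above is essentially a self-contained proof of exactly this inequality in the form needed here.
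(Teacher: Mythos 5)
Your proof is correct. Note that the paper does not actually prove Proposition \ref{PR.1}: it is quoted from \cite{MARKOVIC.JGA.2}, so there is no in-paper argument to compare against. Your argument is the natural one and is essentially a self-contained proof of the length inequality $\ell(f\circ\gamma)\le\int_\gamma d^\ast f$ that the cited reference supplies as its main tool (you acknowledge this yourself in the last paragraph); combined with $d^\ast f\le \|f\|_{\mathrm{B}_w}\,w$ and the hypothesis \eqref{EQ.COND.LAPP.VARPHI} it gives exactly the stated norm bound. The delicate points are all handled: the factorization of the difference quotient through $d_X(\sigma(t),\sigma(t+h))/h\le 1$ for the arc-length parametrization, the degenerate cases where $\sigma(t+h)=\sigma(t)$ or $\sigma(t)$ is isolated (where the quotient is $0$, consistent with the convention $d^\ast f=0$ at isolated points), and the use of the monotonicity theorem for continuous functions with nonpositive upper right Dini derivative. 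Two cosmetic remarks: the final summation over partitions is redundant, since the case $s=0$, $t=\ell$ of your key estimate already gives $d_Y(F(0),F(\ell))\le\Phi(\ell)$; and when you apply the increasing function theorem you should state the Dini condition on $[s,\ell)$ rather than $(s,\ell)$, although omitting the single point $s$ is harmless because a countable exceptional set is permitted for continuous functions.
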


In other words, the proposition says that if $f\in \mathrm {B}_w (X,Y)$ satisfies $d ^\ast  f(x) \le C_f w(x)$, $x\in X$,
where $C_f$ is a constant, then $f$ belongs to $\Lambda^\varphi (X,Y)$, and we have
$d_Y(f(x), f(y))\le  {C}' _f \varphi (d_X(x,y))$,  $x, y\in X$ with ${C}'_f = M C_f $.

\begin{remark}
The integral condition \eqref{EQ.COND.LAPP.VARPHI} connects the weight on a metric space with its ordinary metric
through the majorant. The condition implies that the weighted distance may be controlled by the ordinary one of the
metric space. Instead of that condition, we could assume the condition which more explicitly concerns the weighted
distance 
\begin{equation*}
d_w (x,y) \le M \varphi ( d_X(x,y)),\quad x,y\in X.
\end{equation*}
For example, the condition \eqref{EQ.COND.LAPP.VARPHI} is satisfied by uniform domains and the distance function as
a weight and the standard majorants $\varphi_\alpha$, $\alpha \in (0,1)$. This is the content of the simple lemma
that follows.
\end{remark}

\begin{proposition}\label{PR.2}
Let the majorant $\varphi:[0,\infty)\to [0,\infty)$ be in the class $C^1 (0,\infty)$ with the following property:
There exists a constant $A>0$ such that
\begin{equation}\label{EQ.COND.A}
\frac  {\varphi (t)} {t} < A \varphi'(t), \quad t\in (0,\infty).
\end{equation}
Then for a  mapping $f$ which satisfies \eqref{EQ.RO} we have
\begin{equation*}
\|f\| _{\mathrm {B} _{\varphi'\circ w} (X,Y)}\le A K \|f\|_{\Lambda^\varphi(X,Y)},\quad f\in {\Lambda^\varphi(X,Y)},
\end{equation*} 
for every weight  $w$  on the metric space $X$.
\end{proposition}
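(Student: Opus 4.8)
The plan is to feed the global $\Lambda^\varphi$-estimate into the regularly oscillating inequality \eqref{EQ.RO} and then let the radius approach the value of the weight. First observe that \eqref{EQ.COND.A} forces $\varphi'(t)>0$ for every $t\in(0,\infty)$ — its left-hand side is nonnegative — so $\varphi$ is increasing on $(0,\infty)$ and $\varphi'\circ w$ is indeed a weight, which makes the class $\mathrm{B}_{\varphi'\circ w}(X,Y)$ meaningful. Now fix an arbitrary weight $w$ on $X$, a mapping $f\in\Lambda^\varphi(X,Y)$ satisfying \eqref{EQ.RO}, and put $C_f=\|f\|_{\Lambda^\varphi(X,Y)}$. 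Fix $x\in X$; if $x$ is isolated then $d^\ast f(x)=0$ and there is nothing to prove, so assume $x$ is not isolated. For each $r$ with $0<r<w(x)$, the hypothesis \eqref{EQ.RO} gives
\[
d^\ast f(x)\le\frac{K}{r}\sup_{y\in B_X(x,r)}d_Y(f(x),f(y)).
\]
Since $f\in\Lambda^\varphi(X,Y)$ and $\varphi$ is increasing, every $y\in B_X(x,r)$ satisfies $d_Y(f(x),f(y))\le C_f\varphi(d_X(x,y))\le C_f\varphi(r)$, so the supremum is at most $C_f\varphi(r)$ and therefore
\[
d^\ast f(x)\le KC_f\,\frac{\varphi(r)}{r},\qquad 0<r<w(x).
\]

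Next I would pass to the limit $r\to w(x)^-$. Since $w$ is a weight, $w(x)>0$, and since $\varphi\in C^1(0,\infty)$ the function $t\mapsto\varphi(t)/t$ is continuous at $w(x)$; hence the last display yields $d^\ast f(x)\le KC_f\,\varphi(w(x))/w(x)$. Applying \eqref{EQ.COND.A} at the point $t=w(x)\in(0,\infty)$ gives $\varphi(w(x))/w(x)<A\varphi'(w(x))$, so
\[
d^\ast f(x)\le AKC_f\,\varphi'(w(x))=AKC_f\,(\varphi'\circ w)(x).
\]
As $x\in X$ was arbitrary, this proves $f\in\mathrm{B}_{\varphi'\circ w}(X,Y)$ together with the bound $\|f\|_{\mathrm{B}_{\varphi'\circ w}(X,Y)}\le AKC_f=AK\|f\|_{\Lambda^\varphi(X,Y)}$.

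I do not expect a real obstacle; the proof is essentially the chain of inequalities above. The two places that deserve a word of care are: (i) inequality \eqref{EQ.RO} is only supplied for radii $r$ strictly below $w(x)$, which is why one cannot simply set $r=w(x)$ and must instead take the limit — a step that is legitimate precisely because $w(x)>0$ and $\varphi$ is $C^1$ near $w(x)$; and (ii) the monotonicity of $\varphi$, used to replace $\varphi(d_X(x,y))$ by $\varphi(r)$ under the supremum, which here is not an extra hypothesis but a consequence of \eqref{EQ.COND.A}. Finiteness of $d^\ast f(x)$ and boundedness of $f$ on the balls $B_X(x,r)$, $0<r<w(x)$, are already contained in the standing assumption that $f$ is regularly oscillating, so nothing more is needed there.
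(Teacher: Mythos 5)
Your proof is correct. The paper itself does not prove Proposition \ref{PR.2} --- it is quoted from the author's earlier article \cite{MARKOVIC.JGA.2} --- but your argument (feed the global $\Lambda^\varphi$ bound into \eqref{EQ.RO}, majorize the supremum by $C_f\varphi(r)$ using the monotonicity of $\varphi$ forced by \eqref{EQ.COND.A}, let $r\to w(x)^-$, and then apply \eqref{EQ.COND.A} at $t=w(x)$) is exactly the standard proof one expects for this statement, and your two points of care (taking the limit rather than setting $r=w(x)$, and deriving rather than assuming monotonicity) are handled correctly.
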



As a consequence of the above two propositions, we can derive a version of the Hardy-Littlewood theorem for analytic
mappings on a uniform domain in a normed space. Recall that a domain $D$  with a nonempty boundary in a normed space
over $\mathbb{R}$ is called $c$-uniform if for every $x,y\in D$ there exists a rectifiable arc $\gamma\subseteq D$ with
endpoints at $x$ and $y$ such that:

(i) $\ell(\gamma) \le c \|x-y\|_X$;

(ii) $\min\{\ell(\gamma[x,z]), \gamma[z,y]\} \le c d(z,\partial D)$, $z\in \gamma$.

For example, the unit ball in a normed space is a $2$-uniform domain \cite{VAISALA.CGD}.


\begin{lemma}\label{LE.UNIFORM.ALPHA}
A $c$-uniform domain in a normed space satisfies \eqref{EQ.COND.LAPP.VARPHI} in the case of the standard majorant
$\varphi = \varphi_\alpha$, $\alpha\in (0,1)$, with respect to the weight function $w(x) = d(x,\partial D)^{\alpha-1}$
on $D$ with the constant $M = \frac {2c}\alpha$. In particular, for the unit ball we have $M = \frac {4}\alpha$.
\end{lemma}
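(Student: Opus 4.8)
The plan is to verify the integral condition \eqref{EQ.COND.LAPP.VARPHI} directly, by taking for each pair $x,y\in D$ the $c$-uniform arc $\gamma=\gamma_{x,y}\subseteq D$ guaranteed by the definition of a $c$-uniform domain, and estimating $\int_\gamma w$ where $w(z)=d(z,\partial D)^{\alpha-1}$. I would parameterize $\gamma$ by arc length, $\gamma:[0,\ell]\to D$ with $\ell=\ell(\gamma)\le c\|x-y\|_X$, and split the integral at the midpoint $s=\ell/2$, writing $\int_\gamma w=\int_0^{\ell/2}d(\gamma(s),\partial D)^{\alpha-1}\,ds+\int_{\ell/2}^{\ell}d(\gamma(s),\partial D)^{\alpha-1}\,ds$. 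On the first half, for $s\in[0,\ell/2]$ the subarc $\gamma[x,\gamma(s)]$ has length exactly $s$ (arc-length parameterization) and this is the shorter of the two pieces, so condition (ii) gives $s=\ell(\gamma[x,\gamma(s)])\le c\,d(\gamma(s),\partial D)$, i.e.\ $d(\gamma(s),\partial D)\ge s/c$; since $\alpha-1<0$ this yields $d(\gamma(s),\partial D)^{\alpha-1}\le (s/c)^{\alpha-1}$. The second half is symmetric, using $\ell(\gamma[\gamma(s),y])=\ell-s$.

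Plugging this in, the first integral is bounded by $\int_0^{\ell/2}(s/c)^{\alpha-1}\,ds=c^{1-\alpha}\int_0^{\ell/2}s^{\alpha-1}\,ds=c^{1-\alpha}\,\frac{(\ell/2)^\alpha}{\alpha}\le \frac{c^{1-\alpha}}{\alpha}\,\ell^\alpha$, and the same bound holds for the second by symmetry, so $\int_\gamma w\le \frac{2c^{1-\alpha}}{\alpha}\ell^\alpha$. Now apply (i): $\ell\le c\|x-y\|_X$, hence $\ell^\alpha\le c^\alpha\|x-y\|_X^\alpha=c^\alpha\varphi_\alpha(d_X(x,y))$, and therefore $\int_\gamma w\le \frac{2c^{1-\alpha}}{\alpha}\cdot c^\alpha\,\varphi_\alpha(d_X(x,y))=\frac{2c}{\alpha}\,\varphi_\alpha(d_X(x,y))$. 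This is exactly \eqref{EQ.COND.LAPP.VARPHI} with $M=\frac{2c}{\alpha}$. For the unit ball, $c=2$ by the cited fact \cite{VAISALA.CGD}, giving $M=\frac{4}{\alpha}$.

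A couple of small technical points I would make sure to address. One must check the arc $\gamma$ is rectifiable so that $\int_\gamma w$ makes sense and the arc-length parameterization is available — this is part of the definition of $c$-uniform. One should also note that $w$ is indeed a weight on $D$: it is continuous and everywhere positive on $D$ since $d(\cdot,\partial D)>0$ on the open set $D$ (the boundary being nonempty ensures finiteness of the distance). Finally, since we evaluate the two subarc-length functions $s\mapsto\ell(\gamma[x,\gamma(s)])$ and $s\mapsto\ell(\gamma[\gamma(s),y])$ and use that their sum is $\ell$, I should remark that for an arc parameterized by arc length these are simply $s$ and $\ell-s$, so the midpoint is precisely where the minimum in (ii) switches from one piece to the other; this is what makes the clean split at $\ell/2$ legitimate.

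The proof is essentially a routine computation, so there is no serious obstacle; the only mildly delicate point is bookkeeping the direction of the inequality when raising to the negative power $\alpha-1$, and making sure the constant $c$ is carried correctly through both the length bound (i) and the lower bound $d(\gamma(s),\partial D)\ge s/c$ from (ii) so that the powers of $c$ combine to give exactly $\frac{2c}{\alpha}$ rather than something off by a factor.
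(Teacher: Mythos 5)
Your proof is correct and follows essentially the same route as the paper's: parameterize the uniform arc by arc length, use condition (ii) to bound $d(\gamma(s),\partial D)^{\alpha-1}$ by $c^{1-\alpha}\min\{s,\ell-s\}^{\alpha-1}$, split at the midpoint, integrate, and finish with condition (i). The only cosmetic difference is the stage at which the harmless factor $2^{-\alpha}\le 1$ is discarded.
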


\begin{proof}
For $x$, $y\in D$ let $\gamma$ be a rectifiable arc that satisfies the conditions for uniform domains. Let it be
parametrized by the arc length, and let its length be denoted by $\ell$.

Having in mind that $\alpha -1<0$, and  $\ell \le c \|x-y\|^\alpha_X$, we obtain
\begin{equation*}\begin{split}
\int_\gamma d(z,\partial D)^{\alpha-1} &
\le c^{1-\alpha} \int_\gamma\min\{\ell(\gamma[x,z]),\ell( \gamma[z,y])\}^{\alpha-1}
\\&=2c^{1-\alpha}\int_0^{\frac \ell2}s^{\alpha -1}ds = \frac {2c^{1-\alpha}}\alpha\left(\frac l2\right)^\alpha
\\&\le \frac {2 c^{1-\alpha}}\alpha \frac {c^\alpha\|x-y\|^\alpha_X} {2^\alpha} \le  \frac {2c}{\alpha} \|x-y\|^\alpha_X,
\end{split}\end{equation*}
which proves that the condition  \eqref{EQ.COND.LAPP.VARPHI}  is satisfied for $M = \frac {2c}\alpha$.
\end{proof}

\begin{theorem}
For $\alpha\in (0,1)$ an analytic mapping $f$ on a $c$-uniform domain $D$ in a complex normed space $X$ over, with
the range in another complex normed space $Y$, satisfies
\begin{equation*}
\|df (z)\|_{X\to Y} \le C_f d(z,\partial D)^{\alpha-1}, \quad z\in D,
\end{equation*}
if and only if $f$ is $\alpha$-H\"{o}lder continuous, i.e.
\begin{equation*}
\|f(z) - f(w)\|_Y\le C'_f\|z-w\|_X^{\alpha},\quad  z,w\in D,
\end{equation*}
where $C_f$ and $C'_f$ are constants. Moreover, we have estimates $C'_f\le \frac{2c}\alpha C_f$ and $C_f\le C'_f$.
\end{theorem}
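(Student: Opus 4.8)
The plan is to derive the theorem as a direct corollary of Propositions~\ref{PR.1} and~\ref{PR.2}, combined with Lemmas~\ref{LE.FRECHET}, \ref{LE.SCHWARZ-PICK}, and~\ref{LE.UNIFORM.ALPHA}. The majorant is the standard one $\varphi=\varphi_\alpha$, $\varphi_\alpha(t)=t^\alpha$ with $\alpha\in(0,1)$, and the weight is $w(z)=d(z,\partial D)^{\alpha-1}$. The two implications are proved separately, so I will organize the proof in two parts.

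For the forward implication (the gradient estimate implies $\alpha$-H\"older continuity), I would first observe that by Lemma~\ref{LE.FRECHET} the Fr\'echet norm $\|df(z)\|_{X\to Y}$ coincides with $d^\ast f(z)$, so the hypothesis says exactly that $d^\ast f(z)\le C_f\, w(z)$ for all $z\in D$, i.e.\ $f\in\mathrm{B}_w(D,Y)$ with $\|f\|_{\mathrm{B}_w(D,Y)}\le C_f$. Next, Lemma~\ref{LE.UNIFORM.ALPHA} tells us that a $c$-uniform domain satisfies condition~\eqref{EQ.COND.LAPP.VARPHI} for this $\varphi_\alpha$ and this $w$ with constant $M=\tfrac{2c}{\alpha}$. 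Applying Proposition~\ref{PR.1} then yields $f\in\Lambda^{\varphi_\alpha}(D,Y)=\Lambda^\alpha(D,Y)$ with
\begin{equation*}
\|f\|_{\Lambda^\alpha(D,Y)}\le M\|f\|_{\mathrm{B}_w(D,Y)}\le \frac{2c}{\alpha}\,C_f,
\end{equation*}
which is precisely the estimate $C'_f\le\tfrac{2c}{\alpha}C_f$ and the H\"older inequality $\|f(z)-f(w)\|_Y\le C'_f\|z-w\|_X^\alpha$.

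For the converse implication (H\"older continuity implies the gradient estimate), I would use Proposition~\ref{PR.2} together with the fact, recorded in the discussion after the definition of regularly oscillating mappings, that a bounded analytic mapping on a domain with nonempty boundary in a normed space is regularly oscillating with respect to the weight $z\mapsto d(z,\partial D)$, with constant $K=1$ (this is the content established via the Schwarz--Pick lemma, Lemma~\ref{LE.SCHWARZ-PICK}, in \cite{MARKOVIC.JGA.2}). A subtlety here is that $f$ is assumed $\alpha$-H\"older on all of $D$, hence bounded on every ball $B_X(z,r)\subseteq D$, so~\eqref{EQ.RO} applies. The standard majorant $\varphi_\alpha$ lies in $C^1(0,\infty)$ and satisfies~\eqref{EQ.COND.A}: since $\varphi_\alpha'(t)=\alpha t^{\alpha-1}$, we have $\varphi_\alpha(t)/t=t^{\alpha-1}=\tfrac{1}{\alpha}\varphi_\alpha'(t)<A\varphi_\alpha'(t)$ for any $A>1/\alpha$; taking $A\downarrow 1/\alpha$ gives the optimal constant. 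Applying Proposition~\ref{PR.2} with the weight $w(z)=d(z,\partial D)$ gives $f\in\mathrm{B}_{\varphi_\alpha'\circ w}(D,Y)$, i.e.
\begin{equation*}
d^\ast f(z)\le AK\|f\|_{\Lambda^\alpha(D,Y)}\,\varphi_\alpha'(d(z,\partial D))=AK\alpha\, C'_f\, d(z,\partial D)^{\alpha-1}.
\end{equation*}
Again invoking Lemma~\ref{LE.FRECHET} to rewrite $d^\ast f(z)=\|df(z)\|_{X\to Y}$, and letting $A\to 1/\alpha$, $K=1$, we obtain $\|df(z)\|_{X\to Y}\le C_f\, d(z,\partial D)^{\alpha-1}$ with $C_f\le C'_f$, as claimed.

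The main obstacle — really the only nontrivial point, since the rest is bookkeeping — is ensuring that the two cited propositions genuinely apply in this generality: one must check that the weight $d(\cdot,\partial D)^{\alpha-1}$ used in the forward direction and the weight $d(\cdot,\partial D)$ used in the converse are both legitimate weights (everywhere positive and continuous on $D$, which holds since $D$ is open with nonempty boundary), and that the regular-oscillation property for analytic maps is available over an arbitrary complex normed space $Y$ — this is where the inner-product hypothesis on $Y$ and Lemma~\ref{LE.SCHWARZ-PICK} enter, and it is the reason the Schwarz--Pick lemma was proved in the infinite-dimensional setting earlier in the paper. Once these are in place, the theorem follows by simply chaining the estimates and tracking the constants; no curve-length computation beyond that already done in Lemma~\ref{LE.UNIFORM.ALPHA} is needed.
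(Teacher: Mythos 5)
Your proof follows exactly the paper's route: Proposition~\ref{PR.1} combined with Lemma~\ref{LE.FRECHET} and Lemma~\ref{LE.UNIFORM.ALPHA} for the forward direction, and Proposition~\ref{PR.2} with the regular-oscillation property ($K=1$) and the limit $A\to 1/\alpha$ (the paper's $\beta\to 1$ with $A=\beta/\alpha$) for the converse; the paper's own proof is merely a terser version of the same argument, so your write-up is correct and, if anything, more complete. One small correction to your closing remark: the regular-oscillation property of analytic mappings requires only the ordinary Schwarz lemma (the special case treated at the start of the proof of Lemma~\ref{LE.SCHWARZ-PICK}, valid for an arbitrary normed range space), not the inner-product structure on $Y$ --- which is precisely why the theorem can be stated for a general complex normed space $Y$, the inner-product hypothesis being needed only for the later $p$-regularity results.
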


\begin{proof}
One part of this theorem follows from Proposition \ref{PR.1} having in mind Lemma \ref{LE.FRECHET} and the preceding
lemma. The converse part follows from Proposition \ref{PR.2} since the majorant $\varphi_\alpha$ satisfies the condition
there for the constant $A = \frac{\beta}{\alpha}$, where $\beta>1$ is a real number. Letting $\beta\to 1$ we obtain the
converse estimate concerning the constants.
\end{proof}

Therefore, the classic theorem of Hardy and Littlewood remains for analytic mappings on uniform domains in a normed
space of not necessary finite dimension. Moreover, mutual constant estimates are universal and depend only on $c$ and
$\alpha$. In the case of analytic mappings on the unit ball in a normed space, we can set $a = \frac {4}{\alpha}$ and
$b = 1$, where $a$ and $b$ are the constants from the formulation of this classical theorem in the Introduction. The
preceding theorem also generalizes the result of Gehring and Martio \cite{GEHRING.CV}.


\medskip

Our next result concerns a new subclass of $\mathrm {D}^\ast (X,Y)$ which we call the class of $p$-regular mappings.
We give a precise definition of this notion in the following.

\begin{definition}\label{DEF.P-REGULAR}
Let $X$ and $Y$ be metric spaces and let $w$ be a weight on $X$. A mapping $f\in\mathrm {D}^\ast(X,Y)$, bounded on
every open ball $B_X(x,r)$, $x\in X$, $0<r<w(x)$, is said to be $p$-regular, $p\in [1,\infty)$, with respect to the
weight $w$, if there exist a constant $K$ and a non-empty set $A\subseteq Y$ such that the following local condition,
which resembles the classical Schwarz-Pick inequality, is satisfied
\begin{equation*}
d^\ast f (x) \le \frac {K}r 
\sup_{y\in B(x,r)} |d_Y(f(x), A) ^p  -  d _Y(f(y), A)^p|^{\frac 1p},
\quad x\in X,\, 0<r<w(x),
\end{equation*}
where   $d_Y(z, A) = \inf_{w\in A} d_Y(z,w)$ is the distance  of $z$ from the set $A$.
\end{definition}

Since in a general metric space we do not have the norm, we have used the distance function from a set $A$.

One of the main results of this paper is given in the following theorem. It also contains a version of the condition
\eqref{EQ.COND.LAPP.VARPHI}. The reason for including this condition is the ability to apply the Hardy-Littlewood
theorem given in Proposition \ref{PR.1}.

\begin{theorem}\label{TH.MAIN}
Let $X$ and $Y$ be two metric spaces and $w$ a weight on $X$. Assume that $f:X\to Y$ is $p$-regular as in
Definition \ref{DEF.P-REGULAR}. Then, if $g(z) = d ( f(z), A )^p$ belongs to the local Lipschitz class
$\Lambda^\alpha_{w} (X,\mathbb {R})$, $\alpha \in (0,\infty)$, then $f$ belongs to the class
$\Lambda^{\frac\alpha p} (X,Y)$, provided that the following condition is satisfied: For $\beta = \frac {\alpha}p$ there
exists a constant  $M_\beta$, such  that
\begin{equation*}
d_{w^{\beta-1}} (x,y)\le M_\beta d_X (x,y)^\beta,\quad x,y\in X.
\end{equation*}
Moreover, we have
\begin{equation*}
\|f\| _{\Lambda^{\frac \alpha p} (X,Y)}\le M_{\frac \alpha p}K \|g\| _{\Lambda^\alpha_{w} (X,\mathbb {R} )} ^\frac 1p.
\end{equation*}
\end{theorem}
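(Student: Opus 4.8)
The plan is to reduce Theorem \ref{TH.MAIN} to an application of Proposition \ref{PR.1} by producing a pointwise Bloch-type estimate for $f$ with respect to a suitable weight, and then feeding that estimate together with the integral hypothesis into Proposition \ref{PR.1}. The natural weight to use is $v = w^{\beta-1}$ with $\beta = \alpha/p$; the hypothesis $d_{w^{\beta-1}}(x,y) \le M_\beta\, d_X(x,y)^\beta$ is exactly condition \eqref{EQ.COND.LAPP.VARPHI} for the majorant $\varphi_\beta(t) = t^\beta$ (recall the Remark after Proposition \ref{PR.1}: the explicit weighted-distance bound is an admissible substitute for the integral condition, since $d_v(x,y) = \inf_{\gamma\in\Gamma(x,y)}\int_\gamma v$). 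So once I show $f \in \mathrm{B}_{C v}(X,Y)$ with $C$ controlled by $K\|g\|_{\Lambda^\alpha_w}^{1/p}$, Proposition \ref{PR.1} gives $\|f\|_{\Lambda^\beta(X,Y)} \le M_\beta \cdot C$, which is the claimed conclusion.

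The core of the argument is therefore the pointwise bound. Fix $x \in X$ and $0 < r < w(x)$. By $p$-regularity,
\begin{equation*}
d^\ast f(x) \le \frac{K}{r}\sup_{y\in B(x,r)} |g(x) - g(y)|^{1/p},
\end{equation*}
where $g(z) = d_Y(f(z),A)^p$. Since $g \in \Lambda^\alpha_w(X,\mathbb{R})$ and $y \in B(x,r) \subseteq B(x,w(x))$, we have $|g(x)-g(y)| \le \|g\|_{\Lambda^\alpha_w}\, d_X(x,y)^\alpha \le \|g\|_{\Lambda^\alpha_w}\, r^\alpha$. Substituting,
\begin{equation*}
d^\ast f(x) \le \frac{K}{r}\left(\|g\|_{\Lambda^\alpha_w}\, r^\alpha\right)^{1/p} = K\|g\|_{\Lambda^\alpha_w}^{1/p}\, r^{\alpha/p - 1} = K\|g\|_{\Lambda^\alpha_w}^{1/p}\, r^{\beta-1}.
\end{equation*}
This holds for every $r \in (0,w(x))$, and since $\beta - 1 < 0$ (as $\alpha/p < 1$ — note this needs $\alpha < p$, which I should check is forced by the Lipschitz hypotheses, or is implicit; if $\alpha \ge p$ the exponent is nonnegative and one lets $r \to w(x)$ instead) the right-hand side is increasing in $r$... so in fact I want $r$ small — letting $r \to 0^+$ is wrong since the bound blows up. The correct move: the inequality $d^\ast f(x) \le K\|g\|_{\Lambda^\alpha_w}^{1/p} r^{\beta-1}$ holds for all admissible $r$, and to get the tightest bound I take $r$ as large as allowed, i.e.\ $r \uparrow w(x)$, yielding $d^\ast f(x) \le K\|g\|_{\Lambda^\alpha_w}^{1/p}\, w(x)^{\beta-1}$, which says exactly $f \in \mathrm{B}_{v}(X,Y)$ with $v = w^{\beta-1}$ and $\|f\|_{\mathrm{B}_v(X,Y)} \le K\|g\|_{\Lambda^\alpha_w}^{1/p}$.

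Assembling the two pieces: $f \in \mathrm{B}_{w^{\beta-1}}(X,Y)$ with norm at most $K\|g\|_{\Lambda^\alpha_w}^{1/p}$, and the weight $w^{\beta-1}$ together with the majorant $\varphi_\beta$ satisfies the integral/weighted-distance condition with constant $M_\beta$; Proposition \ref{PR.1} then yields $f \in \Lambda^{\varphi_\beta}(X,Y) = \Lambda^{\alpha/p}(X,Y)$ with $\|f\|_{\Lambda^{\alpha/p}(X,Y)} \le M_\beta\|f\|_{\mathrm{B}_{w^{\beta-1}}(X,Y)} \le M_{\alpha/p} K \|g\|_{\Lambda^\alpha_w(X,\mathbb{R})}^{1/p}$. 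The main obstacle — really the only delicate point — is making sure the passage $r \uparrow w(x)$ is legitimate: the $p$-regularity inequality and the local Lipschitz bound for $g$ are both stated for $0 < r < w(x)$ (strict), so one takes a supremum/limit over such $r$, and continuity of $t \mapsto t^{\beta-1}$ on $(0,\infty)$ gives the bound at $w(x)$; one should also note $f$ is bounded on the balls $B_X(x,r)$, $r < w(x)$, which is part of the hypothesis on $p$-regular mappings and is what makes the quantities $\sup_{y\in B(x,r)} d_Y(f(x),f(y))$ finite so that the manipulations are meaningful.
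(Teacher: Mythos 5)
Your proof is correct and follows essentially the same route as the paper's: derive the Bloch-type bound $d^\ast f(x)\le K\|g\|_{\Lambda^\alpha_w}^{1/p}\,w(x)^{\beta-1}$ by combining $p$-regularity with the local Lipschitz hypothesis on $g$, then apply Proposition~\ref{PR.1} with the weight $w^{\beta-1}$ and the majorant $\varphi_\beta$. The only blemish is the momentary mix-up about the monotonicity of $r\mapsto r^{\beta-1}$ (for $\beta<1$ it is decreasing, not increasing), but you self-correct and land on the choice $r\uparrow w(x)$, which is exactly the step the paper takes.
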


\begin{remark}
Note that for $f\in \Lambda^\alpha (X,Y)$ we have $g\in \Lambda^\alpha (X,\mathbb {R})$, which follows immediately from
the triangle inequality, i.e.
\begin{equation*}
|d_Y(f(x),A) - d_Y(f(y),A) |\le d_Y(f(x), f(y)), \quad x,y\in X.
\end{equation*}
This holds without any assumption on the metric space $X$ (such as the existence of a weight on it). Our result in
Theorem \ref{TH.MAIN} gives some sufficient conditions for the converse implication with possibly different exponents of
the Lipschitz classes.
\end{remark}

\begin{proof}[Proof of Theorem \ref{TH.MAIN}]
Since $g(x) = d_Y(f(x), A)^p$ is in the local class $\Lambda^\alpha_{w} (X,\mathbb {R})$, there exists a constant
$C = C_{f}^w $ such  that for every $x\in X$ we have
\begin{equation*}
| d_Y  (f(x), A)^p  - d_Y (f(y), A)^p| \le C d_X(x,y)^\alpha,\quad y\in B(x,w(x)).
\end{equation*}
Since $f$ is $p$-regular mapping with respect to the weight $w$ and the set $A$, applying the preceding inequality, we
obtain that for $r<w(x)$ there holds
\begin{equation*}\begin{split}
d^\ast f (x) & \le \frac {K}r \sup_{y\in B(x,r)} | d_Y (f(x), A) ^p  -  d_Y(f(y), A)^p|^{\frac 1p}
\\& \le \frac {K}r \sup_{y\in B(x,r)} C^{\frac 1p} d_X(x,y)^{\frac \alpha p}
 \le \frac {K}r  C^{\frac 1p} r^{\frac \alpha p}
 =   K  C^{\frac 1p} r^{\frac \alpha p-1}.
\end{split}\end{equation*}
Since the last  estimate holds for every positive  $r<w(x)$, it follows
\begin{equation*}
d^\ast f (x) \le   K  C^{\frac 1p} w(x)^{\frac \alpha p-1}, \quad  x\in X.
\end{equation*}
Now, we apply Proposition \ref{PR.1} with $w^{\frac \alpha p -1}$ as a weight on the metric space $X$ and the standard
majorant $\varphi = \varphi_{\frac \alpha p }$. We conclude that $f$ belongs to the class
$\Lambda^{\frac \alpha p} (X,Y)$. Moreover, by the same theorem, we have
\begin{equation*}
d _ Y( f(x), f(y))\le M_{\frac \alpha p}  K C^{\frac 1p} d_X(x,y)^{\frac \alpha p}, \quad x,y\in X.
\end{equation*}
This inequality implies the norm estimate
\begin{equation*}
\|f\|_{ \Lambda^{\frac \alpha p}  (X,Y)}\le M_{\frac \alpha p} K  \|g\|_{\Lambda_{w}^\alpha (X,\mathbb{R})}^{\frac 1p},
\end{equation*}
which we aimed to prove.
\end{proof}


We shall now apply the result given in Theorem \ref{TH.MAIN} to bounded analytic mappings on uniform domains in a normed
space. In particular, this theorem has a consequence for bounded analytic mappings on the unit ball in a normed space.
Its application will produce two corollaries depending on the range space. Their proofs are also based on the following
lemma, which says that bounded analytic mappings are 1-regular or 2-regular, depending on the dimension of the range
space.

\begin{lemma}
A bounded analytic mapping $f:D\to Y$, where $D$ is a domain with a non-empty boundary in a complex normed space $X$,
and $Y$ is a complex inner product space, is 1-regular if $\dim(Y)=1$ and 2-regular if $\dim Y$ is not $1$. We assume
regularity in the sense of Definition \ref{DEF.P-REGULAR} with the distance function from the boundary of the domain
$D$ as a weight and the set $A = \{0_Y\}$. In both cases, we have $K=1$.
\end{lemma}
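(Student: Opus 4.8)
The plan is to reduce the statement to the classical Schwarz–Pick estimate on the unit disk by the same localization-and-composition trick used in the proof of Lemma \ref{LE.SCHWARZ-PICK}, combined with Lemma \ref{LE.FRECHET}. Fix $x\in D$ and $0<r<d(x,\partial D)$, so that the open ball $B_X(x,r)$ lies in $D$. Set $M_r=\sup_{y\in B_X(x,r)}\bigl|\,\|f(x)\|_Y^p-\|f(y)\|_Y^p\,\bigr|^{1/p}$ (recall $A=\{0_Y\}$, so $d_Y(f(y),A)=\|f(y)\|_Y$). If $M_r=0$ there is nothing to prove, so assume $M_r>0$. The first step is to rescale: consider the analytic mapping $h:B_X(0,1)\to Y$ defined by $h(\zeta)=\dfrac{1}{\rho}\,f(x+r\zeta)$ for a suitable normalizing constant $\rho>0$ chosen so that $\|h\|_Y\le 1$ on $B_X(0,1)$; the bound $\rho=\sup_{y\in B_X(x,r)}\|f(y)\|_Y$ works, and since $f$ is bounded this is finite. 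By the chain rule $dh(0)=\tfrac{r}{\rho}\,df(x)$, so $\|df(x)\|_{X\to Y}=\tfrac{\rho}{r}\,\|dh(0)\|_{X\to Y}$, and by Lemma \ref{LE.FRECHET} the left side equals $d^\ast f(x)\cdot 1$ after the identification of norms; thus it suffices to bound $\|dh(0)\|_{X\to Y}$.

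The second step applies Lemma \ref{LE.SCHWARZ-PICK} to $h$: we get $\|dh(0)\|_{X\to Y}\le 1-\|h(0)\|_Y^2$ when $\dim Y=1$ and $\|dh(0)\|_{X\to Y}\le\sqrt{1-\|h(0)\|_Y^2}$ otherwise, where $\|h(0)\|_Y=\tfrac1\rho\|f(x)\|_Y$. The third step is purely algebraic: we must convert the bound $\rho\sqrt{1-\|h(0)\|_Y^2}=\sqrt{\rho^2-\|f(x)\|_Y^2}$ (and its $p=1$ analogue $\rho^2-\|f(x)\|_Y^2$ over $\rho$, i.e. $(\rho-\|f(x)\|_Y)(\rho+\|f(x)\|_Y)/\rho\le\rho-\|f(x)\|_Y$... wait) into something controlled by $M_r$. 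The key inequality is $\rho^{p}-\|f(x)\|_Y^{p}\le M_r^{p}$: indeed $\rho=\|f(y_0)\|_Y$ for some $y_0\in\overline{B_X(x,r)}$ (or a supremizing sequence), and then $\rho^p-\|f(x)\|_Y^p\le\sup_{y\in B_X(x,r)}\bigl|\,\|f(y)\|_Y^p-\|f(x)\|_Y^p\,\bigr|=M_r^p$. For $\dim Y\ne 1$, $p=2$: using $\rho+\|f(x)\|_Y\le 2\rho$ or rather estimating directly, $d^\ast f(x)\le\tfrac1r\sqrt{\rho^2-\|f(x)\|_Y^2}\le\tfrac1r M_r$, which is exactly the $2$-regularity bound with $K=1$. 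For $\dim Y=1$, $p=1$: $d^\ast f(x)\le\tfrac1r(\rho^2-\|f(x)\|_Y^2)/\rho=\tfrac1r(\rho-\|f(x)\|_Y)(\rho+\|f(x)\|_Y)/\rho\le\tfrac1r(\rho-\|f(x)\|_Y)\le\tfrac1r M_r$ since $(\rho+\|f(x)\|_Y)/\rho\le 2$... no — since $\|f(x)\|_Y\le\rho$ gives $(\rho+\|f(x)\|_Y)/\rho\le 2$, which would yield $K=2$; the sharper route is to note $\rho^2-\|f(x)\|_Y^2\le M_r\cdot(\rho+\|f(x)\|_Y)/(\text{something})$ — here one uses instead that $\rho-\|f(x)\|_Y\le M_r$ directly and $\rho+\|f(x)\|_Y\le\rho+\rho$, but the clean statement in the lemma is obtained by choosing the rescaling constant more carefully or by the elementary bound $|a-b|\le|a^p-b^p|^{1/p}$ for $a,b\ge 0$, $p\ge 1$, applied after factoring. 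I will carry this out so that $K=1$ emerges in both cases.

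The step I expect to be the main obstacle is precisely this last algebraic reconciliation: getting the constant $K=1$ rather than $K=2$ in the one-dimensional case, and making sure the supremum over $y\in B_X(x,r)$ (an open ball) rather than its closure does not cost anything — this is handled by passing to radii $r'<r$ and letting $r'\uparrow r$, or by a supremizing sequence. The boundedness hypothesis on $f$ is used exactly once, to guarantee $\rho<\infty$ so that the rescaled map $h$ is well-defined; analyticity of $h$ on $B_X(0,1)$ follows since $f$ is Fréchet differentiable on all of $D\supseteq B_X(x,r)$ and $\zeta\mapsto x+r\zeta$ is affine. Everything else is bookkeeping: assemble the pointwise bound $d^\ast f(x)\le\tfrac{K}{r}\sup_{y\in B_X(x,r)}|\,\|f(x)\|_Y^p-\|f(y)\|_Y^p\,|^{1/p}$ for all admissible $r$, which is the defining inequality of $p$-regularity in Definition \ref{DEF.P-REGULAR} with weight $w(z)=d(z,\partial D)$ and $A=\{0_Y\}$.
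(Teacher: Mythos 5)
Your proposal follows the paper's route exactly: rescale $f$ on $B_X(x,r)$ by $\rho=\sup_{y\in B_X(x,r)}\|f(y)\|_Y$ to obtain an analytic map into the unit ball of $Y$, apply Lemma \ref{LE.SCHWARZ-PICK} at the origin, undo the rescaling via $dh(0)=\tfrac r\rho\,df(x)$ together with Lemma \ref{LE.FRECHET}, and finish with the observation $\rho^p-\|f(x)\|_Y^p=\sup_{y}\bigl(\|f(y)\|_Y^p-\|f(x)\|_Y^p\bigr)\le M_r^p$. For $\dim Y\ne1$ ($p=2$) this yields $d^\ast f(x)\le\tfrac1r\sqrt{\rho^2-\|f(x)\|_Y^2}\le\tfrac1r M_r$, which is the paper's computation verbatim; that half of your argument is complete and correct, and the points you worry about (open versus closed ball, $\rho=0$) are indeed harmless.

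The obstacle you flag in the one-dimensional case is genuine, and neither of your proposed escapes closes it. Schwarz--Pick gives $d^\ast f(x)\le\tfrac1r\,(\rho-\|f(x)\|_Y)\cdot\tfrac{\rho+\|f(x)\|_Y}{\rho}$, whose second factor is only bounded by $2$; the inequality $|a-b|\le|a^p-b^p|^{1/p}$ relates the wrong pair of quantities, and no renormalization helps. In fact $K=1$ is false for $p=1$: on $D=\mathbb{D}$ take $g_\delta(z)=a\bigl(\tfrac{1+z}{1-z}\bigr)^{-2i\delta/\pi}$, so that $|g_\delta'(0)|=\tfrac{4\delta a}{\pi}$ while $\sup_{|z|<r}\bigl||g_\delta(z)|-a\bigr|=a\bigl(e^{2\delta\theta_r/\pi}-1\bigr)$ with $\theta_r=2\arctan r<2r$; letting $\delta\to0$, $1$-regularity at $x=0$ with radius $r$ forces $K\ge r/\arctan r>1$ (and $K\ge 4/\pi$ as $r\to1$). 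So the most this argument can deliver when $\dim Y=1$ is $K=2$, and you should not keep hunting for the missing trick: the paper's own proof dismisses this case with ``the rest of the proof is similar,'' and that similar computation also gives only $K=2$. State the one-dimensional case with $K=2$; this merely doubles the constant in Corollary \ref{CORO.NORMED.DIM.1} and affects nothing else downstream.
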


\begin{proof}
Let $x\in D$. For $r<d(x, \partial D)$ the mapping
\begin{equation*}
g( z ) = M ^{-1}f(x + rz),\quad M = \sup_{w\in B(x, r)}\|f(w)\|_Y,
\end{equation*}
is analytic on the unit ball $B_X(0,1)$ and bounded by $1$. By the Schwarz-Pick inequality in Lemma \ref{LE.SCHWARZ-PICK}
we obtain
\begin{equation*}
\|dg (0)\|_{X\to Y}\le \sqrt{1 -\|g(0)\|^2_Y }
\end{equation*}
in the case $\dim Y\not  = 1$. Since  $d g(0) =  r M ^{-1}df(x)$, by the last inequality we have
\begin{equation*}
\|d f (x) \|_{X\to Y} \le \frac {M}r \sqrt{1-\|g(0)\|^2_Y}
=  \frac {M}r \sqrt{1- \frac {\|f(x)\|^2_Y}{M^2}}
= \frac 1r \sqrt{ M^2 -  \|f(x)\|^2_Y} .
\end{equation*}
It follows
\begin{equation*}
\|d f (x) \|_{X\to Y} \le \frac 1 r \sup_{w\in B_X(x,r)} | \|f(w) \|^2_Y  - \| f(x)\|_Y^2|^{\frac 12},
\quad x\in D,\, 0<r<d(x,\partial D),
\end{equation*}
which proves  that  $f$  is $2$-regular.

In a similar fashion, one shows that a bounded analytic function is $f:D \to \mathbb {C}$ is $1$-regular. In this case,
the Schwarz-Pick inequality says that
\begin{equation*}
\|d g (0)\|_{X\to Y}\le  1 -\|g(0)\|^2_Y,
\end{equation*}
and the rest of the proof is similar.
\end{proof}

\begin{corollary}\label{CORO.NORMED.DIM.1}
Let $f:D\to \mathbb{C}$ be a bounded analytic function on the $c$-uniform domain $D$ in a complex normed space $X$. If
$g = |f|$ belongs to the class $\Lambda^{\alpha}_{\mathrm {loc}}(D,\mathbb{R})$, then $f$ belongs to
$\Lambda^{\alpha}(D,\mathbb {C})$, and we have
\begin{equation*}
\|f\|_{\Lambda^{\alpha}(D,\mathbb{C})} \le \frac{4c}\alpha\|g\|_{\Lambda^{{\alpha}}_{\mathrm {loc}}(D,\mathbb{R})}.
\end{equation*}
\end{corollary}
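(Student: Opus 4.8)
The plan is to read this off directly from Theorem \ref{TH.MAIN} with $p=1$, $Y=\mathbb{C}$, the singleton $A=\{0_{\mathbb{C}}\}$, and the weight $w(x)=\tfrac12\,d(x,\partial D)$ on $D$. With these choices $d_Y(f(z),A)^p=|f(z)|=g(z)$, and, by the convention fixed in the preliminaries, the hypothesis $g\in\Lambda^{\alpha}_{\mathrm{loc}}(D,\mathbb{R})$ is precisely $g\in\Lambda^{\alpha}_{w}(D,\mathbb{R})$ for this $w$. It then remains to check the two structural hypotheses of Theorem \ref{TH.MAIN}. For the first, I would invoke the preceding lemma (that a bounded analytic mapping is $1$-regular when the range has dimension $1$): it gives that $f:D\to\mathbb{C}$ is $1$-regular with respect to the weight $d(\cdot,\partial D)$, the set $A=\{0\}$, and constant $K=1$. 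Since $p$-regularity in Definition \ref{DEF.P-REGULAR} is quantified only over radii $0<r<w(x)$, passing from $d(\cdot,\partial D)$ to the smaller weight $w(x)=\tfrac12 d(x,\partial D)$ merely shrinks the range of admissible $r$, so $f$ stays $1$-regular with respect to $w$, still with $K=1$.

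Next I would verify the distance condition for the exponent $\beta=\tfrac\alpha p=\alpha\in(0,1)$. Here $w^{\alpha-1}(x)=(\tfrac12)^{\alpha-1}d(x,\partial D)^{\alpha-1}=2^{1-\alpha}d(x,\partial D)^{\alpha-1}$, and since the weighted distance $d_v=\inf_\gamma\int_\gamma v$ is positively homogeneous in the weight $v$, we get $d_{w^{\alpha-1}}(x,y)=2^{1-\alpha}\,d_{d(\cdot,\partial D)^{\alpha-1}}(x,y)$. Lemma \ref{LE.UNIFORM.ALPHA} provides, for every $x,y\in D$, a rectifiable arc $\gamma_{x,y}\subseteq D$ with $\int_{\gamma_{x,y}}d(z,\partial D)^{\alpha-1}\le\tfrac{2c}{\alpha}\|x-y\|_X^{\alpha}$ (this is exactly \eqref{EQ.COND.LAPP.VARPHI} for $\varphi=\varphi_\alpha$), and taking the infimum over curves yields $d_{d(\cdot,\partial D)^{\alpha-1}}(x,y)\le\tfrac{2c}{\alpha}\|x-y\|_X^{\alpha}$. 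Hence the hypothesis of Theorem \ref{TH.MAIN} holds with $M_\alpha=\tfrac{2^{2-\alpha}c}{\alpha}$, and because $2^{2-\alpha}<4$ for $\alpha\in(0,1)$ we have $M_\alpha<\tfrac{4c}{\alpha}$.

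Both hypotheses verified, Theorem \ref{TH.MAIN} gives $f\in\Lambda^{\alpha/1}(D,\mathbb{C})=\Lambda^{\alpha}(D,\mathbb{C})$ and, with $K=1$ and $p=1$,
\[
\|f\|_{\Lambda^{\alpha}(D,\mathbb{C})}\le M_\alpha\,\|g\|_{\Lambda^{\alpha}_{w}(D,\mathbb{R})}=\frac{2^{2-\alpha}c}{\alpha}\,\|g\|_{\Lambda^{\alpha}_{\mathrm{loc}}(D,\mathbb{R})}\le\frac{4c}{\alpha}\,\|g\|_{\Lambda^{\alpha}_{\mathrm{loc}}(D,\mathbb{R})},
\]
which is the assertion. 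I do not expect a genuine obstacle: the argument is bookkeeping on top of Theorem \ref{TH.MAIN}, the preceding $1$-regularity lemma, and Lemma \ref{LE.UNIFORM.ALPHA}. The only points deserving attention are the factor $\tfrac12$ built into the definition of $\Lambda^{\alpha}_{\mathrm{loc}}$ — handled by the homogeneity of $d_{w^{\alpha-1}}$ in the weight and by the observation that shrinking a weight preserves $p$-regularity — and the harmless replacement of the sharp constant $2^{2-\alpha}c/\alpha$ by the stated $4c/\alpha$.
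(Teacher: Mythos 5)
Your proposal is correct and follows essentially the same route as the paper: both reduce the corollary to Theorem \ref{TH.MAIN} with $p=1$, $A=\{0\}$ and the weight $w=\tfrac12 d(\cdot,\partial D)$, using the $1$-regularity lemma and Lemma \ref{LE.UNIFORM.ALPHA} to verify the hypotheses. You merely spell out the details the paper compresses into ``dividing the inequality by $2^{\beta-1}$ and using some simple estimates'' (the homogeneity of $d_{w^{\alpha-1}}$ in the weight, the shrinking of admissible radii, and the bound $2^{2-\alpha}\le 4$), arriving at the same constant $\tfrac{4c}{\alpha}$.
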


\begin{corollary}\label{CORO.NORMED.DIM,N>1}
Let $X$ be a normed space, $Y$ an inner product space, both over the complex field, and $f:D\to Y$ be a bounded analytic
mapping on the $c$-uniform domain $D\subseteq X$. If the function $g = \|f\|_Y^2$ belongs to
$\Lambda^{\alpha}_{\mathrm{loc}}(D,\mathbb{R})$, then $f$ belongs  $\Lambda^{\frac{\alpha}2}(D,Y)$, and we have
\begin{equation*}
\|f\|_{\Lambda^{\frac{\alpha}2}(D,Y)} 
\le\frac{8c}\alpha\|g\|_{\Lambda^{{\alpha}}_{\mathrm {loc}}(D,\mathbb{R})}^{\frac 12}.
\end{equation*}
\end{corollary}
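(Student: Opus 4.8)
The plan is to derive Corollary \ref{CORO.NORMED.DIM,N>1} as a direct application of Theorem \ref{TH.MAIN} together with the preceding lemma, in exactly the way Corollary \ref{CORO.NORMED.DIM.1} follows in the dimension-one case. First I would invoke the preceding lemma: since $f:D\to Y$ is a bounded analytic mapping into a complex inner product space $Y$ with $\dim Y\neq 1$, it is $2$-regular in the sense of Definition \ref{DEF.P-REGULAR}, with respect to the weight $w(z)=d(z,\partial D)$, the set $A=\{0_Y\}$, and constant $K=1$. With this choice of $A$ we have $d_Y(f(z),A)=\|f(z)\|_Y$, so the function appearing in Theorem \ref{TH.MAIN} is $g(z)=d_Y(f(z),A)^p=\|f(z)\|_Y^2$ with $p=2$, which is precisely the $g$ in the statement, and the hypothesis $g\in\Lambda^\alpha_{\mathrm{loc}}(D,\mathbb{R})$ means exactly $g\in\Lambda^\alpha_w(D,\mathbb{R})$ for this $w$ (recall $\Lambda^\alpha_{\mathrm{loc}}$ is the local class for the weight $\tfrac12 d(\cdot,\partial D)$, which differs from $w$ only by the harmless factor $\tfrac12$).

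Next I would check the integral/weighted-distance condition required by Theorem \ref{TH.MAIN}. Here $\beta=\tfrac{\alpha}{p}=\tfrac{\alpha}{2}\in(0,1)$, so $w^{\beta-1}(z)=d(z,\partial D)^{\beta-1}=d(z,\partial D)^{\frac{\alpha}{2}-1}$. By Lemma \ref{LE.UNIFORM.ALPHA} applied with exponent $\beta=\tfrac\alpha2$ in place of $\alpha$, the $c$-uniform domain $D$ satisfies \eqref{EQ.COND.LAPP.VARPHI} for the majorant $\varphi_\beta$ with this weight and constant $M=\tfrac{2c}{\beta}=\tfrac{4c}{\alpha}$; as noted in the remark after Proposition \ref{PR.1}, this gives the weighted-distance bound $d_{w^{\beta-1}}(x,y)\le M_\beta\, d_X(x,y)^\beta$ with $M_\beta=\tfrac{4c}{\alpha}$. (Strictly, to pass from the curve-integral form to the $d_{w^{\beta-1}}$ form one takes the infimum over the admissible curves $\gamma_{x,y}$; this is immediate.)

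Then I would simply feed these into Theorem \ref{TH.MAIN} with $p=2$, $K=1$, $M_{\alpha/2}=\tfrac{4c}{\alpha}$: the conclusion is $f\in\Lambda^{\alpha/2}(D,Y)$ with
\begin{equation*}
\|f\|_{\Lambda^{\frac\alpha2}(D,Y)}\le M_{\frac\alpha2}\,K\,\|g\|_{\Lambda^\alpha_w(D,\mathbb{R})}^{\frac12}
=\frac{4c}{\alpha}\,\|g\|_{\Lambda^\alpha_{\mathrm{loc}}(D,\mathbb{R})}^{\frac12}.
\end{equation*}
This already yields the stated inclusion; the asserted constant $\tfrac{8c}{\alpha}$ in the corollary is then obtained by also accounting for the factor-$2$ discrepancy between the weight $w(z)=d(z,\partial D)$ used in the lemma on $2$-regularity and the weight $\tfrac12 d(z,\partial D)$ built into the definition of $\Lambda^\alpha_{\mathrm{loc}}$, which inflates $\|g\|_{\Lambda^\alpha_w}$ relative to $\|g\|_{\Lambda^\alpha_{\mathrm{loc}}}$ by at most a bounded factor; tracking this bookkeeping carefully produces the constant $\tfrac{8c}{\alpha}$.

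I do not expect a genuine obstacle here, since every ingredient is already in place; the only point demanding care is the constant bookkeeping in the last step — reconciling the normalizations of the various weights ($d(\cdot,\partial D)$ versus $\tfrac12 d(\cdot,\partial D)$, and the effect of the exponent $\beta-1<0$ on how the factor $\tfrac12$ propagates through $w^{\beta-1}$) so that one arrives at exactly $\tfrac{8c}{\alpha}$ rather than a different absolute multiple of $\tfrac{c}{\alpha}$.
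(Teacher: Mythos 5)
Your overall route is exactly the paper's: invoke the $2$-regularity lemma (with $A=\{0_Y\}$, $K=1$), identify $g=\|f\|_Y^2$ with $p=2$, verify the weighted-distance hypothesis via Lemma \ref{LE.UNIFORM.ALPHA}, and feed everything into Theorem \ref{TH.MAIN}. The structural part is fine. The problem is the last step, where you reconcile the two weights in the wrong direction. You propose to run Theorem \ref{TH.MAIN} with $w(z)=d(z,\partial D)$ and then control $\|g\|_{\Lambda^\alpha_w}$ by $\|g\|_{\Lambda^\alpha_{\mathrm{loc}}}$ ``up to a bounded factor.'' That passage is not harmless: it requires upgrading the H\"older estimate from balls of radius $\tfrac12 d(x,\partial D)$ to balls of radius $d(x,\partial D)$, which needs a chaining argument and produces a factor on the order of $(2^\alpha-1)^{-1}$, not an absolute constant; this blows up as $\alpha\to 0$ and in any case does not yield $\tfrac{8c}{\alpha}$. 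Moreover, since Theorem \ref{TH.MAIN} requires the \emph{same} weight $w$ in the $p$-regularity hypothesis, in the local Lipschitz class, and in the condition on $d_{w^{\beta-1}}$, you cannot mix $d(\cdot,\partial D)$ and $\tfrac12 d(\cdot,\partial D)$ without addressing this.

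The correct (and easier) reconciliation goes the other way, and is what the paper does: take $w(z)=\tfrac12 d(z,\partial D)$ as the single weight throughout. The mapping $f$ is automatically $2$-regular with $K=1$ for this \emph{smaller} weight, since the defining inequality is only required for a smaller range of radii $r$; and $g\in\Lambda^\alpha_{\mathrm{loc}}$ is by definition $g\in\Lambda^\alpha_w$ for this $w$. The factor of $\tfrac12$ then enters only through the weight $w^{\beta-1}=2^{1-\beta}d(\cdot,\partial D)^{\beta-1}$ in the integral condition, where Lemma \ref{LE.UNIFORM.ALPHA} together with $2^{1-\beta}\le 2$ gives $M_\beta=\tfrac{4c}{\beta}$. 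With $\beta=\tfrac{\alpha}{2}$ this is $M_{\alpha/2}=\tfrac{8c}{\alpha}$, and Theorem \ref{TH.MAIN} delivers the stated bound $\|f\|_{\Lambda^{\alpha/2}(D,Y)}\le \tfrac{8c}{\alpha}\|g\|_{\Lambda^\alpha_{\mathrm{loc}}(D,\mathbb{R})}^{1/2}$ directly, with no norm-inflation step at all.
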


\begin{proof}[Proof of Corollary \ref{CORO.NORMED.DIM.1} and Corollary \ref{CORO.NORMED.DIM,N>1}]
Both corollaries follow from Theorem \ref{TH.MAIN}. First of all, by the above lemma we find that a bounded analytic
function on a domain in a normed space is 1-regular and that an analytic mapping is 2-regular provided that the
dimension of the range space is grater $1$. In both cases, we have $K = 1$.

Recall that for the local Lipschitz class in the domain $D$, the weight function is given by
$w(x) = \frac 12 d(x, \partial D)$, $x\in D$. Taking into account Lemma \ref{LE.UNIFORM.ALPHA}, it is easy to check
that this weight function satisfies the condition of Theorem \ref{TH.MAIN} with constant
$M_\beta = \frac {4c}{\beta}$ for every $\beta\in (0,1)$ (just dividing the inequality by $2^{\beta-1}$ and using
some simple estimates). Therefore, as a direct consequence of Theorem \ref{TH.MAIN}, in case $\dim Y \not =  1$, we
have the first one corollary, and in the case $\dim Y \not = 1$ we have the second one.
\end{proof}


\begin{thebibliography}{10pt}

\bibitem{BLASCO.JFA}
O. Blasco, P. Galindo, A. Miralles, \textit{Bloch functions on the unit ball of an infinite dimensional Hilbert space},
Journal of Functional Analysis \textbf{267} (2014), 1188--1204.

\bibitem{BJORNS.BOOK}
A. Björn, J. Björn, \textit{Nonlinear Potential Theory on Metric Spaces},
EMS Tracts in Mathematics \textbf{17}, 2011.

\bibitem{CHU.JFA}
Ch.-H. Chu, H. Hamada, T. Honda, G. Kohr, \textit{Bloch functions on bounded symmetric domains},
Journal of Functional Analysis  \textbf{272} (2017), 2412--2441.

\bibitem{DYAKONOV.ACTM}
K.M. Dyakonov, \textit{Equivalent norms on Lipschitz-type spaces of holomorphic functions},
Acta Mathematica \textbf{178} (1997), 143--167.

\bibitem{DUREN.BOOK}
P.L. Duren, \textit{Theory of HP spaces}, Academic Press, New York and London, 1970.

\bibitem{GEHRING.CV}
F.W. Gehring, O. Martio, \textit{Quasi-disks and the Hardy--Littlewood property},
Complex Variables \textbf{2} (1983), 67--78.

\bibitem{GEHRING.AASF}
F.W. Gehring,  O. Martio, \textit{Lipschitz classes and quasi-conformal mappings},
Annales Academi{\ae} Scientiarum Fennic{\ae} \textbf{10} (1985), 203--219.

\bibitem{HARDY.MZ.1928}
G.H. Hardy,  J.E. Littlewood, \textit{Some properties of fractional integrals, I},
Mathematische Zeitschrift \textbf{27} (1928), 565--606.

\bibitem{HARDY.MZ.1932}
G.H. Hardy, J.E. Littlewood, \textit{Some properties of fractional integrals, II},
Mathematische Zeitschrift \textbf{34} (1932), 403--439.

\bibitem{HEINONEN.BOOK}
J. Heinonen, P. Koskela, N. Shanmugalingam, J.T. Tyson,
\textit{Sobolev spaces on metric measure spaces}, Cambridge University Press, 2015

\bibitem{KALAJ.GJM}
D. Kalaj, \textit{Schwarz lemma for holomorphic mappings in the unit ball},
Glasgow Mathematical Journal, \textbf{60} (2018), 219--224.

\bibitem{LAPPALAINEN.AASF}
V. Lappalainen, \textit{On Lip$_h$-extension Domains},
Annales Academi{\ae} Scientiarum Fennic{\ae} Dissertationes \textbf{56} (1985), 1--52.

\bibitem{MARKOVIC.JGA.1}
M. Markovi\'{c}, \textit{Representations for the Bloch type semi-norm of Fr\'{e}chet differentiable mappings},
The Journal of Geometric Analysis \textbf{31} (2021), 7947--7967.

\bibitem{MARKOVIC.JGA.2}
M. Markovi\'{c}, \textit{Regularly oscillating mappings between metric spaces and a theorem of Hardy and Littlewood},
The Journal of Geometric Analysis \textbf{34} (2024),  article  number 165.

\bibitem{PAVLOVIC.ACTM}
M. Pavlovi\'{c}, \textit{On Dyakonov's paper ''Equivalent norms on Lipschitz-type spaces of holomorphic functions''},
Acta Mathematica \textbf{183} (1999), 141--143.

\bibitem{PAVLOVIC.PA}
M. Pavlovi\'{c}, J. Riihentaus, \textit{Classes of Quasi-nearly Subharmonic Functions},
Potential Analysis \textbf{29} (2008), 89--104.

\bibitem{PAVLOVIC.BOOK}
M. Pavlovi\'{c}, \textit{Function classes on the unit disc},
De  Gruyter Studies in Mathematics \textbf{52}, De Gruyter, Berlin, 2019.

\bibitem{VAISALA.CGD}
J. V\"{a}is\"{a}l\"{a},
\textit{Relatively and inner uniform domains}, Conformal Geometry and Dynamics \textbf{2} (1998), 56--88.

\end{thebibliography}
\end{document}